\newtheorem{thm}{Theorem}[section]
\newtheorem{lem}[thm]{Lemma}
 \newcommand{\lemref}[1]{Lemma~\ref{#1}}
\newcommand{\R}{{\mathbb R}}
\newcommand{\C}{{\mathbb C}}
\newcommand{\bee}{\begin{equation*}}
\newcommand{\eee}{\end{equation*}}
\newcommand{\be}{\begin{equation}}
\newcommand{\ee}{\end{equation}}
\newcommand{\pn}{\par\noindent}
\title{A uniqueness theorem for entire functions}
\author{A G Ramm\\
\small Department of Mathematics\\[-0.8ex]
\small Kansas State University, Manhattan, KS 66506-2602, USA\\[-0.8ex]
\small \texttt{ramm@math.ksu.edu}\\
}
\begin{document}
\date{}
\maketitle

\begin{abstract}
Let $G(k)=\int_0^1g(x)e^{kx}dx$, $g\in L^1(0,1)$. The main result
of this paper is the following theorem.

{\bf Theorem}. {\it If $\limsup_{k\to +\infty}|G(k)|<\infty$, then $g=0$.
There exists $g\not\equiv 0$, $g\in L^1(0,1)$, such that $G(k_j)=0$,
$k_j<k_{j+1}$, $\lim_{j\to \infty}k_j=\infty$, $\lim_{k\to \infty}|G(k)|$
does not exist, $\limsup_{k\to +\infty}|G(k)|=\infty$. This $g$
oscillates infinitely often in any interval $[1-\delta, 1]$, however 
small $\delta>0$ is.}  

\end{abstract}
\pn{\\ {\em MSC: 30D15,  42A38, 42A63}   \\
{\em Key words:} Fourier integrals; entire functions; uniqueness theorem
 }

\section{Introduction}
Let $g\in L^1(0,1)$ and  $G(k)=\int_0^1g(x)e^{kx}dx$.
The function $G$ is an entire function of the complex variable $k$.
It satisfies for all $k\in \C$ the estimate
\be\label{e1}
|G(k)|\leq c_0e^{|k|}, \qquad c_0:=\int_0^1|g(x)|dx,
 \ee 
it is bounded for $\Re k\leq 0$:
\be\label{e2}
|G(k)|\leq c_0,  \qquad \Re k\leq 0,
 \ee
and it is bounded on the imaginary axis:
\be\label{e3}
\sup_{\tau\in \R}|G(i\tau)|\leq c_0,
 \ee
where $\R$ denotes the real axis.

It is not difficult to prove that if $g$ keeps sign in any
interval $[1-\delta,1]$, and $\int_{1-\delta}^1|g(x)|dx>0$ then 
$\lim_{k\to \infty}|G(k)|=\infty$. By $\infty$ we mean $+\infty$
in this paper.

However, there exists a $g\not\equiv 0$,  $g\in L^1(0,1)$, such that
$\lim_{k\to \infty}|G(k)|$ does not exist, but
$\limsup_{k\to \infty}|G(k)|=\infty$.
This $g$ changes sign (oscillates) infinitely often in any interval 
$[1-\delta,1]$, however small $\delta>0$ is, and the corresponding
$G(k)$ changes sign infinitely often in any neighborhood of $+\infty$,
i.e., it has infinitely many isolated positive zeros $k_j$, which 
tend to $\infty$.

Our main result is:
\begin{thm}
If $c_1:=\limsup_{k\to \infty}|G(k)|<\infty$, then $g=0$.
There exists $g\not\equiv 0$,  $g\in L^1(0,1)$, such that
$\limsup_{k\to \infty}|G(k)|=\infty$, $\lim_{k\to \infty}|G(k)|$
does not exist, $G(k_j)=0$, where $k_j<k_{j+1}$, 
$\lim_{j\to \infty}k_j=\infty$, and $g$ oscillates infinitely often
in any interval
$[1-\delta,1]$, however small $\delta>0$ is.
\end{thm}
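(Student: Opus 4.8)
\medskip
\noindent\textbf{Proof proposal.}

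\emph{Part 1 (uniqueness).} The plan is to push the elementary bounds \eqref{e1}--\eqref{e3} through a Phragm\'en--Lindel\"of argument. Suppose $c_1=\limsup_{k\to+\infty}|G(k)|<\infty$. Since $G$ is continuous on $[0,\infty)$, this already forces $\sup_{k\ge 0}|G(k)|<\infty$. By \eqref{e1}, $G$ is entire of order $\le 1$ (indeed of exponential type $\le 1$), and by \eqref{e3} it is bounded on the imaginary axis. I would then apply the Phragm\'en--Lindel\"of principle separately in the quadrants $\{0\le\arg k\le\pi/2\}$ and $\{-\pi/2\le\arg k\le 0\}$: on each pair of bounding rays (the positive real axis and the imaginary axis) $G$ is bounded, the opening is a right angle, and $G$ has order $\le 1$, well below the admissible threshold $2$; hence $G$ is bounded on $\{\Re k\ge 0\}$. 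Combined with \eqref{e2}, $G$ is bounded on $\C$, so Liouville gives $G\equiv G(0)=\int_0^1 g\,dx$. Then $G'(k)=\int_0^1 xg(x)e^{kx}\,dx\equiv0$; setting $k=i\tau$ ($\tau\in\R$) shows the Fourier transform of $xg(x)$ (extended by $0$) vanishes identically, so $xg(x)=0$ a.e.\ and $g=0$. The only thing to verify is that the growth \eqref{e1} is slow enough for Phragm\'en--Lindel\"of in a right angle, which it is.

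\emph{Part 2 (the example).} I would exhibit $g$ explicitly:
\[
g(x)=\cos\bigl(\ln(1-x)\bigr),\qquad 0<x<1 .
\]
Here $|g|\le1$, so $g\in L^1(0,1)$; $g(0)=1$, so $g\not\equiv0$; and $g$ changes sign at every point $x=1-e^{-(n+1/2)\pi}$, infinitely many of which lie in any $[1-\delta,1]$, giving the required oscillation near $1$. Substituting $t=1-x$ and writing $\cos(\ln t)=\tfrac12(t^{i}+t^{-i})$ turns $G$, for real $k>0$, into a Laplace transform,
\[
G(k)=e^{k}\int_0^1\cos(\ln t)\,e^{-kt}\,dt=e^{k}\,\mathrm{Re}\,F(k),\qquad F(k):=\int_0^1 t^{i}e^{-kt}\,dt .
\]

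\emph{Asymptotics and conclusions.} Write $F(k)=k^{-1-i}\gamma(1+i,k)$, where $\gamma(s,k)=\int_0^k u^{s-1}e^{-u}\,du$; since $|u^{i}|=1$ one has $|\gamma(1+i,k)-\Gamma(1+i)|\le e^{-k}$. Hence, with $\Gamma(1+i)=\rho e^{i\theta}$ and $\rho=|\Gamma(1+i)|>0$,
\[
\mathrm{Re}\,F(k)=\mathrm{Re}\bigl(k^{-1-i}\Gamma(1+i)\bigr)+O\!\left(\frac{e^{-k}}{k}\right)=\rho\,\frac{\cos(\ln k-\theta)}{k}+O\!\left(\frac{e^{-k}}{k}\right),
\]
and therefore
\[
G(k)=\rho\,\frac{e^{k}}{k}\,\cos(\ln k-\theta)+O\!\left(\frac1k\right),\qquad k\to+\infty .
\]
From this single asymptotic all four assertions follow. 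Taking $k_m'=e^{\theta+2\pi m}$ gives $G(k_m')=\rho e^{k_m'}/k_m'+O(1/k_m')\to+\infty$, so $\limsup_{k\to+\infty}|G(k)|=\infty$. Taking in addition $k_m''=e^{\theta+(2m+1)\pi}$ gives $G(k_m'')<0$ for large $m$ while $G(k_m')>0$, and since $k_m'<k_m''<k_{m+1}'$ and $G$ is continuous, $G$ has a zero $k_j$ in each interval $(k_m',k_m'')$, with $k_j\to\infty$; ordering them gives $k_j<k_{j+1}$. In particular $\liminf_{k\to+\infty}|G(k)|=0\neq\infty$, so $\lim_{k\to+\infty}|G(k)|$ does not exist.

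\emph{Main obstacle.} Part 1 is routine once Phragm\'en--Lindel\"of is brought in. In Part 2 the delicate point is the remainder estimate: one must ensure the error in the asymptotic formula for $G$ is, at the chosen points $k_m'$ and $k_m''$, exponentially smaller than the oscillating main term $\rho e^{k}k^{-1}\cos(\ln k-\theta)$ — it is $O(1/k)$ against a main term of size $\rho e^{k}/k$ — because it is precisely this gap that produces both the unboundedness and the sign changes, hence the zeros $k_j$. The same argument goes through verbatim with $g(x)=(1-x)^{-\sigma}\cos(\tau\ln(1-x))$ for any $0\le\sigma<1$ and $\tau\ne0$.
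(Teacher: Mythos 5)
Your Part 1 is essentially the paper's own argument: Phragm\'en--Lindel\"of in the two quadrants of the right half-plane using the boundedness on the positive real and imaginary axes together with the order-$1$ growth from \eqref{e1}, then \eqref{e2} and Liouville. You differ only in the last step --- the paper kills the constant via Riemann--Lebesgue on the imaginary axis, while you differentiate and apply Fourier uniqueness to $xg(x)$; both work. Part 2 is where you genuinely diverge, and your route checks out. The paper builds $g$ as an alternating series $\sum_j(-1)^j\epsilon_jf_j$ of disjointly supported nonnegative bumps accumulating at $x=1$, and chooses the $\epsilon_j$ and sample points $b_j$ inductively so that $G(b_{2m})\ge\omega$ and $G(b_{2m+1})\le-\omega$, deducing $\limsup_{k\to\infty}|G(k)|=\infty$ only indirectly, by contradiction with Lemma \ref{lem1}. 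You instead take the single explicit function $g(x)=\cos(\ln(1-x))$ and compute exact asymptotics: the substitution $t=1-x$, the identity $F(k)=k^{-1-i}\gamma(1+i,k)$, the bound $|\gamma(1+i,k)-\Gamma(1+i)|\le e^{-k}$, and $\Gamma(1+i)\ne0$ are all correct, so $G(k)=\rho\,e^{k}k^{-1}\cos(\ln k-\theta)+O(1/k)$ and the main term overwhelms the error at your sample points $e^{\theta+2\pi m}$ and $e^{\theta+(2m+1)\pi}$, yielding the blow-up, the sign changes, the zeros $k_j\to\infty$, and $\liminf_{k\to\infty}|G(k)|=0$. What each approach buys: the paper's construction is soft and flexible (a whole family of examples, no special functions, but a somewhat sketchy induction and a reliance on Lemma \ref{lem1} for the unboundedness), whereas your example is completely explicit and self-contained and proves slightly more, namely that $|G|$ tends to $+\infty$ along an explicit sequence while vanishing along another.
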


In Section 2 proofs are given. The main tools in the proofs
are a Phragmen-Lindel\"of (PL) theorem (see, e.g.,\cite{PS}, \cite{Ru},)
and a construction, developed in \cite{R}  for a study of 
resonances in quantum scattering on a half line.

For convenience of the reader we formulate the Phragmen-Lindel\"of  
theorem, used in Section 2. A proof of this theorem can be found, e.g., in 
\cite{PS}.

By $\partial Q$ the 
boundary of the set $Q$
is denoted, $M>0$ is a constant, and $0<\alpha<2\pi$. 

{\bf Theorem (PL)}. {\it Let $G(k)$ be holomorphic in an angle $Q$
of opening $\frac {\pi}{\alpha}$ and continuous up to its boundary.
If $\sup_{k\in \partial Q}|G(k)|\leq M$, and the order $\rho$ of $G(k)$
is less than $\alpha$, i.e.,
$$\rho:=\limsup_{r\to \infty}\frac{\ln\ln \max_{|k|=r}|G(k)|}{\ln r}< 
\alpha,$$
then $\sup_{k\in Q}|G(k)|\leq M$.}

In Section 2 this Theorem will be used for $\alpha=\frac{\pi}{2}$  
and $\rho=1<\alpha$.

\section{Proofs}

The proof of Theorem 1.1 is based on two lemmas, and the conclusion of 
Theorem 1.1 follows immediately from these lemmas.
\begin{lem}\label{lem1}
If 
\be\label{e4}
\limsup_{k\to \infty}|G(k)|<\infty,
 \ee
then 
\be\label{e5}
g=0.
 \ee
\end{lem}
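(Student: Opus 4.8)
The plan is to exploit the three estimates \eqref{e1}, \eqref{e2}, \eqref{e3} already recorded in the introduction, together with hypothesis \eqref{e4}, and apply the Phragmen--Lindel\"of theorem (PL) quadrant by quadrant. First I would observe that \eqref{e4} gives a constant $c_1$ with $|G(k)|\le c_1$ for all real $k\ge R$, for some $R>0$; combined with continuity of $G$ on the compact segment $[0,R]$ and with \eqref{e2}, this shows $G$ is bounded on the entire real axis, say $|G(x)|\le M_1$ for all $x\in\R$. Next I would treat the first quadrant $Q_1=\{k:\ \Re k\ge 0,\ \Im k\ge 0\}$: its boundary is a piece of the real axis and a piece of the positive imaginary axis, on which $|G|$ is bounded by $M_1$ and by $c_0$ respectively (using \eqref{e3}). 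The order of $G$ is $\rho=1$ by \eqref{e1}, which is strictly less than $\alpha=2$ (the opening being $\pi/2=\pi/\alpha$), so PL applies and yields $|G(k)|\le M:=\max(M_1,c_0)$ throughout $Q_1$. The same argument applied to the second quadrant (with \eqref{e2} on the negative real half-axis, which in fact already gives the bound there) gives boundedness on the closed upper half-plane, and the reflection $k\mapsto\bar k$ — or repeating the argument in the lower two quadrants — gives $|G(k)|\le M$ for all $k\in\C$.

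At this point $G$ is a bounded entire function, hence constant by Liouville's theorem; and since $G(k)\to G(0)=\int_0^1 g(x)\,dx$ would have to equal its value as $\Re k\to-\infty$, one sees the constant must be $0$ (more directly: a bounded $G$ forces $c_0$-type growth to be absent, but cleanest is to note $G(-k)=\int_0^1 g(x)e^{-kx}dx\to 0$ as $k\to+\infty$ by dominated convergence after the substitution, so the constant value is $0$). Therefore $G\equiv 0$, i.e. $\int_0^1 g(x)e^{kx}dx=0$ for all $k\in\C$. Expanding $e^{kx}$ in its power series and integrating term by term (justified by uniform convergence on $[0,1]$ for each fixed $k$, or simply by differentiating $G$ repeatedly at $k=0$), all the moments $\int_0^1 g(x)x^n\,dx$ vanish for $n=0,1,2,\dots$. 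Since polynomials are dense in $C[0,1]$ and $g\in L^1(0,1)$, this forces $g=0$ a.e.

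The only genuinely delicate point is verifying the hypotheses of PL precisely as stated — namely that $G$ is holomorphic in the open quadrant and \emph{continuous up to the boundary} (immediate, since $G$ is entire), that the order condition $\rho<\alpha$ holds (immediate from \eqref{e1}, giving $\rho\le 1<2$), and that the boundary data are genuinely bounded (this is where \eqref{e2}, \eqref{e3} and the consequence of \eqref{e4} on the positive real axis all get used). I do not expect any real obstacle here: the lemma is essentially a packaging of classical Phragmen--Lindel\"of plus Liouville plus the Weierstrass/moment uniqueness, and the main work is simply to assemble the four boundary estimates correctly so that PL can be invoked in each quadrant. One alternative to the final moment argument, which avoids the term-by-term integration, is to note directly that a bounded entire function of exponential type that tends to $0$ along a ray is identically $0$, and then conclude $g=0$ from the injectivity of the Laplace transform on $L^1(0,1)$.
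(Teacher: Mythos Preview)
Your proof is correct and follows essentially the same route as the paper: Phragmen--Lindel\"of in each quadrant, then Liouville, then identifying the constant as zero and deducing $g=0$. The only cosmetic differences are that the paper invokes the Riemann--Lebesgue lemma on the imaginary axis ($G(i\tau)\to 0$) rather than dominated convergence on the negative real axis, and cites injectivity of the Fourier transform rather than your moment/density argument; both pairs of choices are equivalent here.
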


\begin{proof}
The entire function $G$ in the first quadrant $Q_1$ of the complex
plane $k$, that is, in the region $0\leq \arg k\leq \frac {\pi}{2},\, 
|k|\in [0,\infty)$, 
satisfies 
estimate \eqref{e1} and is bounded
on the boundary of $Q_1$: 
\be\label{e5'}
\sup_{k\in \partial Q_1}|G(k)|\le \max (c_0, c_1):=c_2.
 \ee
By a  Phragmen-Lindel\"of theorem (see, e.g., \cite{Ru}, p.276)
one concludes that
\be\label{e6}
\sup_{k\in Q_1}|G(k)|\le c_2.
 \ee
A similar argument shows that
\be\label{e7}
\sup_{k\in Q_m}|G(k)|\le c_2, \qquad m=1,2,3,4,
 \ee
where $Q_m$ are quadrants of the complex $k-$plane.
Consequently, $|G(k)|\le c_2$ for all $k\in \C$, and, by the Liouville 
theorem, $G(k)=const$. This $const$ is equal to zero, because, by the 
Riemann-Lebesgue lemma, $\lim_{\tau \to \infty}G(i\tau)=0$.

If $G=0$, then $g=0$ by the injectivity of the Fourier transform.
\lemref{lem1} is proved.
\end{proof}

\begin{lem}\label{lem2}
There exists $g\not\equiv 0$, 
$g\in C^\infty(0,1)$, such that
$$\limsup_{k\to \infty}|G(k)|=\infty,$$ 
$\lim_{k\to \infty}|G(k)|$
does not exist, $G(k_j)=0$, where $k_j<k_{j+1}$,
$\lim_{j\to \infty}k_j=\infty$, and $g$ oscillates infinitely often
in any interval
$[1-\delta,1]$, however small $\delta>0$ is.
\end{lem}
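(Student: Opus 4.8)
The plan is to produce $g$ explicitly as a lacunary-type series supported near the right endpoint $x=1$, exploiting the fact that for $k>0$ large the integral $G(k)=\int_0^1 g(x)e^{kx}dx$ is dominated by the behavior of $g$ near $1$. Concretely, I would set $g(x)=\sum_{n\ge 1} a_n \varphi_n(x)$, where each $\varphi_n$ is a smooth bump supported in a tiny interval $I_n=[1-\varepsilon_n,\,1-\varepsilon_{n+1}]$ with $\varepsilon_n\downarrow 0$, oscillating a prescribed (eventually unbounded) number of times inside $I_n$, and the amplitudes $a_n$ chosen so that the series converges in $L^1(0,1)$ (in fact one can arrange $g\in C^\infty(0,1)$ by making the bumps and their derivatives decay, though $g$ need not extend smoothly to $x=1$). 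The mechanism from \cite{R} is that by tuning the "frequency" of the oscillation of $\varphi_n$ together with the scale $\varepsilon_n$, one can make the contribution of $I_n$ to $G(k)$ either nearly cancel (giving a zero $k_j$) or reinforce (giving $|G(k)|$ large) as $k$ ranges over a suitable increasing sequence.

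The key steps, in order, are: (1) Choose scales $\varepsilon_n\downarrow 0$ and a sequence $k_n\to\infty$ rapidly (a lacunarity condition such as $k_{n+1}\gg k_n$ and $\varepsilon_n k_n\to\infty$) so that, for $k$ near $k_n$, the ratio $e^{k(1-\varepsilon_m)}/e^{k(1-\varepsilon_n)}=e^{-k(\varepsilon_m-\varepsilon_n)}$ makes the blocks $I_m$ with $m<n$ negligible and, after normalizing by $e^{k}$, the blocks with $m>n$ also controlled; thus $G(k)\approx a_n e^{k}\hat\varphi_n$-type term. (2) Compute the model integral $\int_{I_n}\varphi_n(x)e^{kx}dx$: with $\varphi_n$ a smooth windowed cosine of frequency $\omega_n$, this is essentially $e^{k(1-\varepsilon_{n+1})}$ times a function of $k$ that vanishes for a discrete increasing set of $k$ (producing the zeros $k_j$) and is comparably large in between. (3) Pick the amplitudes $a_n$ so that $\sum |a_n|\,|I_n|^{1/2}<\infty$ or similar, ensuring $g\in L^1$; simultaneously arrange $a_n e^{k_n}\cdot(\text{size of block integral at its peak})\to\infty$, which forces $\limsup_{k\to\infty}|G(k)|=\infty$. (4) Verify that between consecutive peaks $G$ passes through $0$, so there are zeros $k_j<k_{j+1}\to\infty$, and hence $\lim_{k\to\infty}|G(k)|$ cannot exist. (5) Let the number of oscillations of $\varphi_n$ inside $I_n$ tend to $\infty$; since $\bigcup_{n\ge N} I_n$ covers $[1-\delta,1]$ minus a set shrinking to $\{1\}$ for any $\delta$, and infinitely many of these blocks already oscillate arbitrarily often, $g$ oscillates infinitely often in every $[1-\delta,1]$.

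The main obstacle I anticipate is step (1)–(2): making the asymptotic isolation of a single block rigorous. One must show that after dividing by the dominant exponential $e^{k(1-\varepsilon_n)}$, the tail $\sum_{m>n} a_m \int_{I_m}\varphi_m e^{kx}dx$ is genuinely small uniformly for $k$ in the relevant window around $k_n$ — this needs the scales $\varepsilon_n$ to decrease slowly enough that $\varepsilon_{n}-\varepsilon_{n+1}$ is not too small (so $e^{-k(\varepsilon_n-\varepsilon_{n+1})}$ is a real gain) yet the amplitudes decay fast enough to absorb the growth $e^{k\varepsilon_{n+1}}\le e^{k}$ of later blocks. Balancing the three sequences $(\varepsilon_n),(k_n),(a_n)$ against each other — together with the frequencies $\omega_n$ — is the delicate bookkeeping; once a consistent choice is made, the estimates for $G(k_j)=0$ and for the divergence of the $\limsup$ follow by the stationary-phase / direct-integration computation of the model block integral, and the $L^1$ bound and the oscillation claim are then immediate.
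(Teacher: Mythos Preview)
Your plan could in principle be made to work, but it takes a markedly more complicated route than the paper. In the paper's proof each bump $f_j\ge 0$ is \emph{nonnegative} and supported in $\Delta_j=[x_j,x_{j+1}]$; the oscillation of $g$ near $1$ is produced simply by alternating the \emph{signs} of consecutive blocks, $g=\sum_j(-1)^j\epsilon_j f_j$, not by making each block oscillate internally. The zeros $k_j$ are then obtained not by analysing a model integral but by the intermediate value theorem: since $G_j(k)>0$ and $G_m(k)/G_j(k)\to\infty$ for $m>j$, one can inductively pick $b_2<b_3<\cdots$ so that a suitable partial sum of $\sum(-1)^j\epsilon_jG_j(b_p)$ is $\ge\omega$ at even $p$ and $\le -\omega$ at odd $p$; passing to the limit gives the same inequalities for $G$ itself, hence a zero in each $(b_{2m},b_{2m+1})$. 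Finally, the paper does \emph{not} prove $\limsup_{k\to\infty}|G(k)|=\infty$ by direct estimates at all: it simply notes that $g\not\equiv 0$ by construction, so Lemma~1 forces the $\limsup$ to be infinite.

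Compared with this, your approach front-loads all the difficulty into the three-sequence balancing problem you flag in steps (1)--(2): you must simultaneously (a) damp later blocks at scale $k_n$ via their internal oscillation, (b) keep the peak of block $n$ large enough that $\limsup=\infty$, and (c) control $L^1$ norms. This is doable but delicate, and it is unnecessary here: using positive bumps with alternating signs makes the domination $G_m/G_j\to\infty$ a one-line observation, eliminates the frequency parameter $\omega_n$ entirely, and lets you outsource the $\limsup$ claim to Lemma~1. Your construction would buy you something if you wanted quantitative control on where the zeros lie or on the growth rate of $|G|$ along a subsequence, but for the qualitative statement of the lemma the paper's argument is both shorter and avoids the bookkeeping you correctly identify as the main obstacle.
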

\begin{proof}
Let  $\Delta_j=[x_j,x_{j+1}]$, $0<x_j<x_{j+1}<1$, $\lim_{j\to 
\infty}x_j=1$, $j=2,3,....$, $f_j(x)\geq 0$, $f_j\in 
C^\infty_0(\Delta_j)$, $\int_{\Delta_j}f_j(x)dx=1$, and 
$\epsilon_j>0$, $\epsilon_{j+1}<\epsilon_j$,  be a sequence of numbers
such that $f(x)\in C^\infty(0,1)$, where
\be\label{e8}
f(x):=\sum_{j=2}^\infty \epsilon_j f_j(x).
 \ee
Define 
\be\label{e9}
g(x):=\sum_{j=2}^\infty (-1)^j \epsilon_j f_j(x), \qquad g\not\equiv 0.
 \ee
The function $g$ oscillates infinitely often in any interval $[1-\delta, 
1]$, however small $\delta>0$ is.

Let 
\be\label{e10}
G(k):=\sum_{j=2}^\infty (-1)^{j} \epsilon_j G_j(k), \qquad G_j(k):=
\int_{\Delta_j}f_j(x)e^{kx}dx.
 \ee
Note that $G_j(k)>0$ for all $j=2,3,...$ and all $k>0$, and 
\be\label{e11}
\lim_{k\to \infty}\frac {G_m(k)}{G_j(k)}=\infty, \qquad m>j,
 \ee
because intervals $\Delta_j$ and $\Delta_m$ do not intersect
if $m>j$.

Fix an arbitrary small number $\omega>0$. 

Let us construct  a sequence 
of numbers 
$$b_j>0, \qquad b_{j+1}>b_j, \qquad
\lim_{j\to \infty} b_j=\infty,$$ such that
\be\label{e12}
G(b_{2m})\geq \omega, \qquad G(b_{2m+1})\leq -\omega, \qquad m=1,2,....
 \ee
This implies the existence of $k_j\in (b_{2m}, b_{2m+1})$, such that
$$G(k_j)=0,\qquad k_j<k_{j+1}, \qquad \lim_{j\to \infty} k_j=\infty.$$

Moreover,
\be\label{e13}
\limsup_{k\to \infty}|G(k)|=\infty,
 \ee 
because otherwise $G=0$ by Lemma 1, so $g=0$ contrary to
the construction, see \eqref{e9}.

Let us choose $b_2>0$ arbitrary. Then $G_2(b_2)>0$. 
Making $\epsilon_3>0$ smaller, if necessary, one gets
\be\label{e14}
\epsilon_2G_2(b_2)-\epsilon_3G_3(b_2)\geq \omega.
 \ee
Choosing $b_3>b_2$ sufficiently large, one gets
\be\label{e15}
\epsilon_2 G_2(b_3)-\epsilon_3 G_3(b_3)\leq -\omega.
 \ee
This is possible because of \eqref{e11}.

Suppose that $b_2,....b_{2m+1}$ are constructed, so that
\be\label{e16}
\sum_{j=2}^{2m} (-1)^{j} \epsilon_j G_j(b_{2p})\geq \omega, \qquad 
p=1,2,..m,
 \ee
and 
\be\label{e17}
\sum_{j=2}^{2m+1} (-1)^{j} \epsilon_j G_j(b_{2p+1})\leq -\omega, \qquad 
p=1,....,m.
 \ee
Let $m\to \infty$. Then
\be\label{e18}
\lim_{m\to \infty}\sum_{j=2}^{2m} (-1)^{j} \epsilon_j G_j(k)=
\lim_{m\to \infty}\sum_{j=2}^{2m+1} (-1)^{j} \epsilon_j G_j(k)=G(k),
 \ee
where $G(k)$ is defined in \eqref{e10}, and the convergence in 
\eqref{e18} is uniform on compact subsets of $[0,\infty)$.
Therefore  inequalities \eqref{e12} hold.
\lemref{lem2} is proved.
\end{proof}

Theorem 1 follows from Lemmas 1 and 2. \hfill $\Box$

\end{document}